\documentclass[11pt,a4paper]{article}

\usepackage{iftex}
\ifPDFTeX
  \usepackage[T1]{fontenc}
\fi
\usepackage[english]{babel}
\usepackage{amsmath,amssymb,amsthm,mathtools}
\numberwithin{equation}{section}
\usepackage{bm}
\usepackage{microtype}
\usepackage{xcolor}
\usepackage{float}
\usepackage{enumerate,enumitem}
\usepackage[margin=2.3cm]{geometry}
\usepackage{tikz}
\usetikzlibrary{calc,fit,positioning}
\usepackage{todonotes}
\usepackage[colorlinks,linkcolor=blue,citecolor=blue,urlcolor=blue]{hyperref}
\usepackage[capitalize,nameinlink,noabbrev]{cleveref}
\counterwithin{figure}{section}

\newtheorem{conjecture}{Conjecture}[section]
\crefname{conjecture}{conjecture}{conjectures}
\Crefname{conjecture}{Conjecture}{Conjectures}

\newtheorem{theorem}{Theorem}[section]
\newtheorem*{thm-non}{Theorem}

\newtheorem{proposition}[theorem]{Proposition}
\newtheorem{lemma}[theorem]{Lemma}

\newtheorem{claim}[theorem]{Claim}

\newtheorem{problem}[theorem]{Problem}
\theoremstyle{definition}
\newtheorem{definition}[theorem]{Definition}
\newtheorem*{defn-non}{Definition}

\newenvironment{poc}
{
\begin{proof}[Proof of claim]}
  {
\end{proof}}

\newcommand{\eps}{\varepsilon}

\title{\LARGE Small circumference in regular sublinear expanders}

\author{
Yaobin Chen \quad Hong Liu \quad Xin Wei \quad Fan Yang
\thanks{All authors are supported by the Institute for Basic Science
(IBS-R029-C4). Fan Yang is also supported by Natural Science Foundation
of China (12301447), Natural Science Foundation of Shandong Province
(ZR2024QA056), and China Scholarship Council. Emails: \texttt{\{ybchen,hongliu,weixinma\}@ibs.re.kr},
\texttt{fyang@sdu.edu.cn}}\\[0.6em]
\small Extremal Combinatorics and Probability Group (ECOPRO),\\[-0.1em]
\small Institute for Basic Science (IBS), Daejeon, South Korea\\[0.4em]
}
\date{}

\begin{document}

\maketitle

\begin{abstract}
Sublinear expansion is weak enough to be extracted from arbitrary graphs while retaining nearly all of their average degree, yet it has proved strong enough to force global structures in many sparse extremal problems. Letzter, Methuku and Sudakov [JLMS 2026] developed methods yielding nearly Hamilton cycles in sufficiently dense regular sublinear expanders, and Montgomery [ICM 2026] subsequently conjectured that, every sufficiently large (but constant) degree $d$-regular sublinear expander is Hamiltonian. We disprove this conjecture in a strong form by constructing $n$-vertex $d$-regular sublinear expanders with degree $d=\left(\frac12+o(1)\right)\log^2 n$, which does not even has a cycle covering a positive fraction of its vertices. 

The construction blows up one side of a biregular Ramanujan graph into almost-complete blocks while keeping the other side independent. The Ramanujan incidence graph certifies expansion for arbitrary mixtures of partial blocks and separator vertices, whereas the independent side forms a sparse vertex separator that prevents a cycle from visiting enough blocks. The construction also explains why $\log^2 n$ is the natural degree scale for this obstruction.
\end{abstract}

\section{Introduction}

Hamiltonicity is a canonical test of whether local density or pseudorandomness forces a genuinely spanning structure. Classical criteria such as those of Dirac and Ore impose linear degree conditions, while the Chv\'atal--Erd\H{o}s theorem uses connectivity together with a bound on the independence number \cite{ChvatalErdos,Dirac,Ore}. In sparse graphs, degree alone is insufficient, and expansion has become the principal replacement: it supplies the global connectivity needed to join local pieces into a spanning cycle. This philosophy is particularly successful for spectral expanders. For instance, a conjecture of Krivelevich and Sudakov \cite{Sudakov1}, asserting that a sufficiently large constant ratio between the degree and the nontrivial eigenvalues forces Hamiltonicity, was recently resolved by Dragani\'c, Montgomery, Munh\'a Correia, Pokrovskiy and Sudakov \cite{Sudakov2}.

Linear expansion, however, is too strong to be recovered from an arbitrary sparse graph without a substantial loss of density. Sublinear expanders, introduced by Koml\'os and Szemer\'edi \cite{KS1,KS2}, were designed precisely to bridge this gap: their expansion rate deteriorates only polylogarithmically with the size of the set, and every graph contains such an expander with nearly the same average degree. This extraction principle has become a standard first step in sparse extremal graph theory. It underlies major progress on clique subdivisions, minors and immersions, prescribed cycle lengths, cycle decompositions and cycles with many chords; see \cite{3Mont,4chords,Let-survey,16Liu-JAMS,immersion,minor2,MontgomerySurvey,minor1}. In particular, structural results for sublinear expanders often transfer to much broader classes of sparse graphs.

We use the Koml\'os--Szemer\'edi vertex-expansion formulation. For a graph $G$ and $X\subseteq V(G)$, let
\[
 N_G(X):=\{v\in V(G)\setminus X: v\text{ has a neighbor in }X\}
\]
be the external neighborhood of $X$. Given $\varepsilon_1>0$ and $k>0$, define
\begin{equation}\label{eq:rhofunction}
\rho(x)=\rho(x;\varepsilon_1,k):=
\begin{cases}
0, & x<k/5,\\[2mm]
\displaystyle\frac{\varepsilon_1}{\log^2(15x/k)}, & x\ge k/5.
\end{cases}
\end{equation}
We suppress $\varepsilon_1$ and $k$ when they are clear from context.

\begin{definition}[Sublinear expander, \cite{KS1,KS2}]
A graph $G$ is an \emph{$(\varepsilon_1,k)$-expander} if
\[
 |N_G(X)|\ge \rho(|X|)|X|
\]
for every $X\subseteq V(G)$ with $k/2\le |X|\le |V(G)|/2$.
\end{definition}

Long cycles provide a particularly sharp test of what this weak expansion condition can force. Letzter, Methuku and Sudakov \cite{Nearly-H-cycle} developed new methods for finding nearly Hamilton paths and cycles in nearly regular sublinear expanders. They proved that for every fixed $\varepsilon>0$, every $n$-vertex $d$-regular $(\varepsilon,d)$-expander with $d\ge(\log n)^{130}$ contains a cycle of length at least
\[
 n-\frac{n}{\log n}.
\]
Thus regular sublinear expanders of sufficiently large polylogarithmic degree are already almost Hamiltonian. Montgomery asked whether regularity in fact removes the remaining exceptional vertices once the degree is merely a sufficiently large constant.

\begin{conjecture}[\cite{MontgomerySurvey}, Conjecture 5.2]\label{conj}
For every $\varepsilon>0$, there exists $d_0$ such that, for every $d\ge d_0$, every $d$-regular $(\varepsilon,d)$-expander contains a Hamilton cycle.
\end{conjecture}

This conjecture was supported by several results for closely related notions of sparse expansion. Hamiltonicity is known under stronger constant-factor vertex-expansion and large-set connectivity assumptions \cite{Sudakov2,HefetzKrivelevichSzabo}. More directly, Brada\v{c} and Janzer \cite{Janzer} proved that an $n$-vertex $d$-regular edge expander is Hamiltonian once $d\ge(\gamma^{-1}\log n)^K$, provided the graph is either bipartite or quantitatively far from bipartite. Although their expansion condition is stronger and the degree still depends polylogarithmically on $n$, their theorem shows that regular sublinear expansion can support spanning constructions under natural structural hypotheses. Together with the nearly Hamilton cycle theorem, this suggested that the weakness of sublinear expansion might be overcome by exact regularity.

Our main result shows that this is not the case and disproves \Cref{conj} in a strong sense. The examples are not only non-Hamiltonian; no cycle covers even a positive fraction of their vertices. Recall that the \emph{circumference} $c(G)$ is the maximum length of a cycle in $G$.

\begin{theorem}\label{thm:main}
Let $0<\eta<1/2$ and $0<\varepsilon_2<1/20$. There exists
$\varepsilon_0=\varepsilon_0(\eta,\varepsilon_2)>0$ such that, for every
fixed $0<\varepsilon_1<\varepsilon_0$ and infinite many integer pairs $(d,n)$, there is a 
$d$-regular $n$-vertex graph $G$ such that
\begin{enumerate}[label=\textup{(\arabic*)}]
 \item\label{thcon1} $G$ is an $(\varepsilon_1,\varepsilon_2d)$-expander,
 \item\label{thcon2} $c(G)<\eta n$, and
 \item\label{thcon3} $d=(1/2+o_d(1))\log^2 n$.
\end{enumerate}
\end{theorem}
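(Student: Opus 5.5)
We will build $G$ from a biregular Ramanujan bipartite graph $H$ with parts $A$ and $B$, where every vertex of $A$ has degree $a$ and every vertex of $B$ has degree $b$. Each vertex $x\in A$ is replaced by a block $K_x$ of size $s$, taken to be a clique minus a perfect matching or a suitable regular almost-complete graph. Each $B$-vertex $y$ stays a single vertex and is joined to every vertex of each block $K_x$ with $xy\in E(H)$; the set $B$ stays independent. To make $G$ regular we need the following.
\begin{itemize}
\item A $B$-vertex has degree $bs$.
\item A block vertex has degree $(s-1-r)+a$, where $r$ is the number of edges removed inside the block.
\end{itemize}
Choosing $s\approx a$ and $r\approx a-1$ (so the block is $(s-1-r)$-regular), we get $d\approx bs$ with $s-1-r+a=bs$; for instance take $b=1$ or $2$ and tune $r$. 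We will choose $|B|$ so that $|B|\ll n$ while the blocks carry almost all vertices. Concretely, $n=|A|s+|B|$ with $|A|a=|B|b$. The key parameter is $s\approx d$, and expansion will force $s\sim\log^2$ of something.

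\textbf{Circumference.} Every cycle in $G$ that visits more than one block must pass through $B$ between consecutive blocks, since blocks are only joined via $B$. Hence a cycle meets at most $|B|$ block-visits. A cycle then has length at most $s$ (one block) or at most $|B|(s+1)$. So we need $|B|s<\eta n$, i.e. $|B|s<\eta|A|s$, which means $|B|<\eta|A|$ and requires $a<\eta b$. This forces the ratio $b/a$ large. Then regularity requires a $B$-vertex degree $bs$ equal to $d\approx s+a$, which is inconsistent with $s$ large. I would therefore revise: let $B$-vertices attach to only part of each block. Each $B$-vertex picks one vertex, or a few, in each adjacent block, spread out so that each block vertex has about $a b'/s$ neighbours in $B$. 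Regularity then becomes a bookkeeping matter: block vertices have degree $\approx s$ and $B$-vertices have degree $b\cdot(\text{attachments})=d$. The circumference bound above survives unchanged, since it only uses that $B$ separates blocks and $|B|\le \eta n/(2d)$ or so.

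\textbf{Expansion.} Take $X$ with $\varepsilon_2 d/2\le |X|\le n/2$. Split $X$ into the following parts:
\begin{itemize}
\item almost-full blocks, where $X$ contains at least half of the block;
\item partial blocks, where $X$ contains fewer than half;
\item the set $X\cap B$.
\end{itemize}
A partial block contributes at least about $|X\cap K_x|$ external neighbours inside its own block, by near-completeness. This already gives linear expansion unless the partial-block mass is small. For full blocks and $X\cap B$, project to $H$: let $S\subseteq A$ be the full blocks and $T=X\cap B$. Then $N_G(X)$ contains $N_H(S)\setminus T$ as $B$-vertices, and contains the blocks of $N_H(T)\setminus S$, each giving at least one vertex and usually many. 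The expander mixing lemma for Ramanujan biregular graphs gives vertex expansion of sets in $A$ and in $B$ of size $\ll |A|$ by factor about $\min(b,a)/\text{poly}$ up to sizes $\approx|A|/\sqrt{ab}$. For large sets the mixing lemma gives an additive constant-fraction boundary. The requirement $|N(X)|\ge \varepsilon_1|X|/\log^2(15|X|/k)$ must hold when $|X|\approx n/2$, where the neighbourhood we can certify is about $|N_H(S)|\gtrsim$ a fraction of $|B|$. This gives the constraint $|B|\gtrsim \varepsilon_1 n/\log^2 n$, compatible with the circumference constraint $|B|\lesssim \eta n/d$ exactly when $d\lesssim \log^2 n$. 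This is the source of $d=(1/2+o(1))\log^2 n$: the constant $1/2$ comes from $\log^2(15|X|/k)$ with $|X|=n/2$ and $k=\varepsilon_2 d$, together with optimizing $|B|\approx n/d$.

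\textbf{Main obstacle.} The hardest step is the uniform expansion verification for mixed sets, handling all of the following simultaneously while matching the exact $\log^2$ rate:
\begin{itemize}
\item partial blocks whose internal boundary is absorbed by $X\cap B$;
\item full blocks whose $B$-neighbours lie in $X$;
\item sets whose $B$-part is large relative to their block part.
\end{itemize}
I would handle it by a case analysis on $\theta=|X\cap B|/|X|$. First, if many block vertices are in partial blocks, use internal boundaries. Otherwise, use Ramanujan mixing (Alon–Chung type bound $e(S,T)\le \frac{ab'}{\dots}|S||T|/|B|+\lambda\sqrt{|S||T|}$ with $\lambda\le\sqrt{a-1}+\sqrt{b-1}$) to show that $T$ cannot cover $N_H(S)$ unless $|T|\gtrsim|S|\cdot\min(\ldots)$. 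In that case, the blocks adjacent to $T$ but outside $S$ supply the boundary.
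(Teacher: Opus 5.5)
Your revised construction (each separator vertex joined to one port of each adjacent block, almost-complete blocks, independent separator) is the paper's construction, and your circumference argument is the paper's. The gap is in the expansion verification, which is the heart of the proof. There you give only a plan, and its one concrete step is false. You claim $N_G(X)\supseteq N_H(S)\setminus T$ for the set $S$ of blocks at least half contained in $X$. But in the revised construction a separator vertex sees only one (or a few) ports of each adjacent block, and those ports may be omitted from $X$. For instance, if $X$ is the set of non-port vertices of a single block, that block is almost full, yet $X$ has no neighbour in $B$ at all. Its entire boundary consists of the omitted ports, and your scheme counts internal boundary only for blocks less than half full. For the same reason, a block of $N_H(T)\setminus S$ contributes nothing when it is a partial block whose ports adjacent to $T$ lie in $X$. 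A case analysis on $|X\cap B|/|X|$ does not repair this. What is needed is a trade-off inside the same blocks: separator neighbours lost through omitted ports must be paid for by those same omitted vertices.

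That trade-off is the missing idea, and it uses no half threshold. Let $R\subseteq A$ be \emph{all} blocks met by $X$, let $q$ be the number of vertices of these blocks outside $X$, and let $p$ be the number of blocks meeting $X$ in exactly one vertex. Then:
\begin{itemize}
\item near-completeness gives at least $q-p$ internal neighbours;
\item each vertex of $N_H(R)\setminus T$ that is not a neighbour of $X$ can be charged to a distinct omitted port, so at most $q$ of them are lost;
\item $T$ reaches exactly $e_H(T,A\setminus R)$ distinct ports in untouched blocks.
\end{itemize}
Hence $|N_G(X)|\ge\Phi-p$ with $\Phi:=e_H(T,A\setminus R)+|N_H(R)\setminus T|$. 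The singleton blocks alone give $|N_G(X)|\ge(d-1)p$, so $|N_G(X)|\ge\Phi/2$.

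You then still need two regimes. If $|R|>3|A|/5$, the internal boundary is at least $d|R|-|X|>|A|/40$. Otherwise, the mixing lemma gives $|N_H(R)|\ge\alpha\min\{d|R|,|A|\}$ and $e_H(T,A\setminus R)\ge d|T|/5$ once $|T|>\alpha|A|/2$, and hence $\Phi\ge 2\delta\min\{d(|R|+|T|),|A|\}$. The bound on $|N_H(R)|$ is where $\lambda=O(\sqrt d)$ and a block's separator degree $a\ge\eta d/8$ enter, and it forces $\alpha\sim\eta^2$. This yields linear expansion for small sets and a boundary of order $\delta|A|$ for large ones.

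The large-set bound suffices only because the parameters make $\rho(n/2)\,n/2<\delta|A|$. This requires taking $t$ maximal with $\log n\le\sqrt{2d}$, and $\varepsilon_1<\delta=2^{-10}\eta^2$. Choosing $t$ this way needs $(\beta d,d)$-biregular Ramanujan graphs with $d2^t$ blocks for every $t$, supplied by Marcus--Spielman--Srivastava $2$-lifts, which you never invoke. Finally, the constant $1/2$ in (3) is a normalization paid for by letting $\varepsilon_0$ depend on $\eta$. It is not forced by $|X|=n/2$ and $k=\varepsilon_2 d$, as you suggest.
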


The obstruction to large circumference is a sparse separator, but realizing it inside a regular sublinear expander requires two competing demands to be reconciled. On the one hand, deleting the separator must leave many dense blocks, so that a cycle can visit only as many blocks as the separator can connect. On the other hand, every block, every partial block and every mixture involving separator vertices must still have the prescribed external neighborhood, while all degrees remain exactly $d$.

We resolve this tension by starting from a biregular Ramanujan graph with bipartition $(L,S)$. Each vertex of $L$ is replaced by an almost-complete $(d+1)$-vertex block, and its incident edges are attached to designated ports in that block; the class $S$ is retained as an independent set. The near-completeness of the blocks supplies internal expansion and repairs the degrees of the ports, while the spectral mixing of the base graph controls the interface between touched blocks and $S$. The main verification must handle arbitrary sets, not only unions of whole blocks. We do so by separating their neighborhood into an internal deficit term, the ports reached from separator vertices, and the separator vertices exposed by touched blocks. Singleton blocks are the only source of loss in this decomposition, and their large internal neighborhoods compensate for it. This internal--spectral dichotomy is the central technical point of the proof.

At the same time, $S$ remains a vertex separator. Removing the vertices of a cycle that lie in $S$ breaks the cycle into paths, each contained in a single block. Hence a cycle can visit at most $|S|$ blocks, which gives the circumference bound. This also reveals why the construction lives at the logarithmic-square scale. A union of a positive proportion of the blocks has order $\Theta(n)$ but neighborhood only $\Theta(n/d)$. Sublinear expansion at this scale requires a neighborhood of order $\Theta(n/\log^2(n/d))$, so the separator obstruction is compatible precisely with
\[
 d=O\!\left(\log^2\frac nd\right).
\]
Our choice of parameters reaches $d=(1/2+o(1))\log^2 n$.

To see that our theorem gives counterexamples in Montgomery's normalization, set
$ \varepsilon_*:=\varepsilon_1
 \left(\frac{\log 3}{\log(15/\varepsilon_2)}\right)^2.$
Since $t\mapsto \log t/\log(5t/\varepsilon_2)$ is increasing for $t\ge3$, for every $x\ge d$ we have
$ \frac{\varepsilon_1}{\log^2(15x/(\varepsilon_2d))}
 \ge
 \frac{\varepsilon_*}{\log^2(3x/d)}.$
Thus every graph supplied by \cref{thm:main} is an $(\varepsilon_*,d)$-expander in the sense of \Cref{conj}.

The remainder of the paper is organized as follows. In \cref{sec:preliminaries}, we recall biregular Ramanujan graphs, $2$-lifts and the expander mixing lemma. In \cref{sec:construction}, we define the block--separator construction and choose its parameters. In \cref{sec:main-proof}, we prove the circumference and expansion bounds and complete the proof of \cref{thm:main}. We discuss the degree scale and variants of the construction in \cref{sec:conc}. All logarithms are natural.

\section{Preliminaries}\label{sec:preliminaries}
A bipartite graph $B$ with bipartition $(L,S)$ is $(r,s)$-\emph{biregular} if every vertex in $L$ has degree $r$ and every vertex in $S$ has degree $s$. Obviously, $r|L|=s|S|$. Let $M_B$ denote the $|L|\times |S|$ biadjacency matrix of $B$, whose
$(u,v)$-entry is $1$ if $uv\in E(B)$ and $0$ otherwise. Then
the adjacency matrix of $B$ can be written as
$$
    A_B=
    \begin{pmatrix}
        0 & M_B\\
        M_B^{\mathsf T} & 0
    \end{pmatrix}.
$$
Consequently, the eigenvalues of $A_B$ consist of the positive and negative singular values of $M_B$, together with additional zeros when $|L|\neq |S|$.

Since
$M_B\mathbf{1}_S=r\mathbf{1}_L$ and $M_B^{\mathsf T}\mathbf{1}_L=s\mathbf{1}_S,$
the largest singular value of $M_B$ is $\sqrt{rs}$. We call this the \emph{trivial singular value}. Writing the singular values of $M_B$, with multiplicity, as
$$
    \sqrt{rs}=\sigma_1(M_B)\geq \sigma_2(M_B)\geq\cdots,
$$
we define
$$
    \lambda(B):=\sigma_2(M_B).
$$
Equivalently, the trivial adjacency eigenvalues of $B$ are $\pm\sqrt{rs}$, and every nontrivial adjacency eigenvalue has absolute value at most $\lambda(B)$. Following Marcus, Spielman and Srivastava~\cite{MSS}, we say that $B$ is \emph{Ramanujan} if
$$
    \lambda(B)\leq \sqrt{r-1}+\sqrt{s-1}.
$$

We briefly recall the $2$-lift construction used in~\cite{MSS}.
Given a graph $B$, a $2$-lift $\widehat{B}$ is obtained by replacing each vertex $v\in V(B)$ with a pair $\{v_0,v_1\}$. For every edge $uv\in E(B)$, its two lifts are chosen to be either
$$
    \{u_0v_0,u_1v_1\}
    \qquad\text{or}\qquad
    \{u_0v_1,u_1v_0\}.
$$
Equivalently, these two choices are encoded by a signing
$\tau\colon E(B)\to\{+1,-1\}$. If $B$ is bipartite with
bipartition $(L,S)$, then every $2$-lift of $B$ is bipartite with
bipartition
$$
    \bigl(L\times\{0,1\},\,S\times\{0,1\}\bigr).
$$
Moreover, a $2$-lift preserves the two degrees and doubles the sizes of both vertex classes.

The adjacency eigenvalues of $\widehat{B}$ are the multiset union of the eigenvalues of $B$, called the \emph{old eigenvalues}, and the eigenvalues of the signed adjacency matrix associated with $\tau$, called the \emph{new eigenvalues}. Marcus, Spielman and Srivastava~\cite{MSS} proved that the signing can be chosen so that the new eigenvalues satisfy the Ramanujan bound. Since the old eigenvalues are inherited from the base graph, this makes it possible to construct bipartite Ramanujan graphs inductively by taking suitable $2$-lifts. We obtain the following consequence from their result in \cite{MSS}.

\begin{lemma}%[\cite{MSS}]
\label{lem:Ramanujan-existence}
For all integers $c,d\geq 3$ and $t\geq 0$, there exists a $(c,d)$-biregular bipartite Ramanujan graph $B$ with bipartition
$(L,S)$ such that
    $|L|=d2^t$
 and 
    $|S|=c2^t.$
\end{lemma}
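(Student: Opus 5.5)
The plan is to build $B$ by the inductive $2$-lift scheme of Marcus, Spielman and Srivastava, starting from a small $(c,d)$-biregular bipartite base graph that is already Ramanujan. The natural base is the complete bipartite graph $K_{d,c}$, with $|L|=d$ and $|S|=c$. Every vertex of $L$ has degree $c$ and every vertex of $S$ has degree $d$, so it is $(c,d)$-biregular. Its biadjacency matrix $M=J_{d\times c}$ has rank one. Hence its only nonzero singular value is the trivial one, $\sqrt{cd}$, and $\lambda(K_{d,c})=0\le\sqrt{c-1}+\sqrt{d-1}$. This settles the case $t=0$.

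For the inductive step, suppose $B_t$ is a $(c,d)$-biregular bipartite Ramanujan graph with classes of sizes $d2^t$ and $c2^t$. By the discussion preceding the lemma, every $2$-lift $\widehat B_t$ has three properties:
\begin{enumerate}[label=\textup{(\roman*)}]
 \item it is bipartite with classes $L\times\{0,1\}$ and $S\times\{0,1\}$, so the class sizes become $d2^{t+1}$ and $c2^{t+1}$;
 \item it preserves both degrees, so it is still $(c,d)$-biregular;
 \item its spectrum is the multiset union of the old eigenvalues (those of $B_t$) and the new eigenvalues (those of the signed adjacency matrix $A_\tau$).
\end{enumerate}
The old nontrivial eigenvalues already have absolute value at most $\sqrt{c-1}+\sqrt{d-1}$ by induction. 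The old trivial eigenvalues $\pm\sqrt{cd}$ remain the trivial eigenvalues of the lift, since the lift is again $(c,d)$-biregular.

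It remains to choose a signing $\tau$ so that every new eigenvalue is bounded in absolute value by $\sqrt{c-1}+\sqrt{d-1}$. This is exactly the content of the Marcus--Spielman--Srivastava theorem for bipartite graphs. The expected characteristic polynomial of $A_\tau$ over uniform random signings is the matching polynomial of $B_t$, and the family of these characteristic polynomials is an interlacing family. Hence some signing has largest root at most the largest root of the matching polynomial. By Heilmann--Lieb together with the universal-cover bound, this largest root is at most the spectral radius of the $(c,d)$-biregular tree, namely $\sqrt{c-1}+\sqrt{d-1}$. Since $A_\tau$ is the signed adjacency matrix of a bipartite graph, its spectrum is symmetric about $0$. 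So the bound on the largest new eigenvalue also bounds the absolute values of all new eigenvalues.

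Putting these together, $\lambda(\widehat B_t)\le\sqrt{c-1}+\sqrt{d-1}$, and we set $B_{t+1}:=\widehat B_t$. Iterating $t$ times gives the lemma.

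The only step with real content is the existence of a good signing. I would simply cite \cite{MSS}, rather than reprove interlacing families. The remaining points need only bookkeeping, though two deserve a line each. First, the nonzero eigenvalues of the signed bipartite matrix really are $\pm$ the singular values of its signed biadjacency block, so "new eigenvalues" and "new singular values" agree. Second, multiplicity of the trivial value is harmless: if $B_t$ is disconnected the value $\sqrt{cd}$ may have multiplicity greater than one. This cannot occur for $t=0$, since $K_{d,c}$ is connected, but it could occur after lifting. To avoid this, I would note that any new eigenvalue is at most $\sqrt{c-1}+\sqrt{d-1}<\sqrt{cd}$ for $c,d\ge3$. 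Hence no new eigenvalue equals $\sqrt{cd}$, so $\sigma_2$ is never trivial, and $\lambda(B)$ is well defined as stated.
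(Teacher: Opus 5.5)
Your proposal is correct and is essentially the paper's proof. You start from $K_{d,c}$, whose rank-one biadjacency matrix gives $\lambda=0$, and iterate Marcus--Spielman--Srivastava $2$-lifts, using the symmetry of the new bipartite spectrum about $0$ to turn their one-sided bound into a bound on absolute values. Your extra remark that $\sqrt{c-1}+\sqrt{d-1}<\sqrt{cd}$ keeps the trivial singular value simple at every stage is a harmless refinement that the paper leaves implicit.
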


\begin{proof}
Start with the complete bipartite graph
$B_0:=K_{d,c},$
where the vertex class of size $d$ is denoted by $L$ and the vertex class of size $c$ by $S$. Thus, every vertex of $L$ has degree $c$, while every vertex of $S$ has degree $d$, so $B_0$ is $(c,d)$-biregular. Its biadjacency matrix has rank $1$. Hence all of its nontrivial singular values are $0$, and therefore $B_0$ is Ramanujan.

%Marcus, Spielman and Srivastava~\cite[Theorems~5.3 and~5.6]{MSS} showed that every bipartite Ramanujan graph admits a $2$-lift that is also Ramanujan. 
Marcus, Spielman and Srivastava~\cite[Theorems~5.3 and~5.6]{MSS}
showed that every bipartite Ramanujan graph admits a signing for which
all new eigenvalues satisfy the corresponding Ramanujan bound.
Since the new spectrum of a bipartite lift is symmetric about zero,
the resulting \(2\)-lift is also bipartite Ramanujan. 
Iterating this construction $t$ times preserves
the degrees $c$ and $d$ and doubles the sizes of both vertex classes at each step. The resulting graph therefore satisfies
$|L|=d2^t$ and $|S|=c2^t,$
as required.
\end{proof}

We will also use the standard expander mixing lemma for biregular bipartite graphs. For $A\subseteq L$ and $Z\subseteq S$, let $e_B(A,Z)$ denote the number of edges in $B$ between $A$ and $Z$.

\begin{lemma}[Biregular expander mixing lemma]
\label{lem:biregular-mixing}
Let $B$ be an $(r,s)$-biregular bipartite graph with bipartition
$(L,S)$. Then for every $A\subseteq L$ and $Z\subseteq S$, we have
$$
\left|
    e_B(A,Z)-\frac{r|A||Z|}{|S|}
\right|
\leq
\lambda(B)
\sqrt{
    |A|\left(1-\frac{|A|}{|L|}\right)
    |Z|\left(1-\frac{|Z|}{|S|}\right)
}
\leq
\lambda(B)\sqrt{|A||Z|}.
$$
Since $r|L|=s|S|$, the main term can equivalently be written as
$
    \frac{r|A||Z|}{|S|}
    =
    \frac{s|A||Z|}{|L|}.
$
\end{lemma}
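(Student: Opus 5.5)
The statement just given is the biregular expander mixing lemma (\Cref{lem:biregular-mixing}). I will prove it by the standard spectral argument applied to the singular value decomposition of $M_B$.

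\textbf{Setup.} Write $e_B(A,Z)=\mathbf 1_A^{\mathsf T}M_B\mathbf 1_Z$. The normalized vectors $u_1=\mathbf 1_L/\sqrt{|L|}$ and $v_1=\mathbf 1_S/\sqrt{|S|}$ form the top singular pair of $M_B$. This holds because $M_Bv_1=r\sqrt{|S|/|L|}\,u_1=\sqrt{rs}\,u_1$, where we used $r|L|=s|S|$.

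\textbf{Splitting off the trivial part.} Decompose the indicator vectors as
\[
\mathbf 1_A=\tfrac{|A|}{|L|}\mathbf 1_L+x,\qquad \mathbf 1_Z=\tfrac{|Z|}{|S|}\mathbf 1_S+y,
\]
with $x\perp\mathbf 1_L$ and $y\perp\mathbf 1_S$. We have $\|x\|^2=|A|(1-|A|/|L|)$ and $\|y\|^2=|Z|(1-|Z|/|S|)$.

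Expand the bilinear form. The main term is
\[
\tfrac{|A||Z|}{|L||S|}\,\mathbf 1_L^{\mathsf T}M_B\mathbf 1_S=\tfrac{|A||Z|}{|L||S|}\,r|L|=\tfrac{r|A||Z|}{|S|}.
\]
For the cross terms, $\mathbf 1_L^{\mathsf T}M_By=(M_B^{\mathsf T}\mathbf 1_L)^{\mathsf T}y=s\,\mathbf 1_S^{\mathsf T}y=0$. Similarly, $x^{\mathsf T}M_B\mathbf 1_S=r\,x^{\mathsf T}\mathbf 1_L=0$. Hence
\[
e_B(A,Z)-\tfrac{r|A||Z|}{|S|}=x^{\mathsf T}M_By.
\]

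\textbf{Bounding the error term.} Take an SVD $M_B=\sum_i\sigma_i u_iv_i^{\mathsf T}$ with the top pair $(u_1,v_1)$ as above. The singular vectors $\{u_i\}$ and $\{v_i\}$ are orthonormal, so $x\perp u_1$ and $y\perp v_1$. Therefore
\[
x^{\mathsf T}M_By=\sum_{i\ge2}\sigma_i\langle x,u_i\rangle\langle v_i,y\rangle.
\]
By Cauchy--Schwarz and Bessel's inequality, $|x^{\mathsf T}M_By|\le\sigma_2\|x\|\|y\|=\lambda(B)\|x\|\|y\|$. This is exactly the first inequality of the lemma.

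The second inequality is immediate, since $1-|A|/|L|\le1$ and $1-|Z|/|S|\le1$. The alternative form of the main term follows from $r/|S|=s/|L|$.

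\textbf{Main obstacle.} The only subtle point is justifying that the SVD can be chosen with $(u_1,v_1)$ as the top pair, so that the orthogonality to $\mathbf 1_L$ and $\mathbf 1_S$ transfers to the remaining singular vectors.

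This is legitimate because the subspaces $\mathbf 1_L^{\perp}$ and $\mathbf 1_S^{\perp}$ are mapped into each other by $M_B$ and $M_B^{\mathsf T}$ respectively; for example, $\mathbf 1_L^{\mathsf T}M_By=0$ whenever $y\perp\mathbf 1_S$. So the restriction of $M_B$ to $\mathbf 1_S^{\perp}\to\mathbf 1_L^{\perp}$ has its own SVD, with largest singular value equal to $\sigma_2(M_B)=\lambda(B)$.

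This gives directly $|x^{\mathsf T}M_By|\le\lambda(B)\|x\|\|y\|$, with no choice among possibly repeated top singular values needed.
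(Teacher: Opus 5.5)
Your proof is correct, and it is the standard singular-value argument that the paper relies on; the paper calls this lemma standard and gives no proof of its own. Your reduction to $x^{\mathsf T}M_By$ on $\mathbf 1_L^{\perp}\times\mathbf 1_S^{\perp}$ is the right one, and so is the block-diagonal decomposition of $M_B$, which handles a possibly repeated top singular value.

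That decomposition identifies the top singular value of the restriction with $\sigma_2(M_B)$ only because $\sqrt{rs}$ is the largest singular value of $M_B$. The paper asserts this in its preliminaries, and it also follows from $\|M_B\|^2\le rs$ by the Schur test.

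There is one small slip: $M_Bv_1=r\mathbf 1_L/\sqrt{|S|}=r\sqrt{|L|/|S|}\,u_1$, not $r\sqrt{|S|/|L|}\,u_1$. Since $|L|/|S|=s/r$, this does equal $\sqrt{rs}\,u_1$, as you intended.
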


\section{The construction}\label{sec:construction}
In this section, we first describe the graph $G$ in terms of the parameters \(\beta,d,m\), and then specify their exact values. 
\begin{definition}
   Let $G=G(\beta, d, m)$ be a graph as follows. We begin with a \((\beta d,d)\)-biregular bipartite Ramanujan graph $B$ with bipartition \((L,S)\), where
    $|L|=m$ and $|S|=\beta m$.
    For each \(i\in L\), replace \(i\) by a copy of
\[
H_i:=K_{d+1}-M_i,
\]
where \(M_i\) is a matching of size \(\beta d/2\). Call the \(\beta d\) endpoints of \(M_i\) the \emph{ports} of \(H_i\), and identify them bijectively with the edges of \(B\) incident with \(i\). For every \(is\in E(B)\), join \(s\) to the port of \(H_i\) corresponding to the edge $is$, and keep \(S\) independent, see Figure \ref{fig:def3.1}.
\end{definition}

\begin{figure}[H]
\begin{center}
      \includegraphics[scale=0.32]{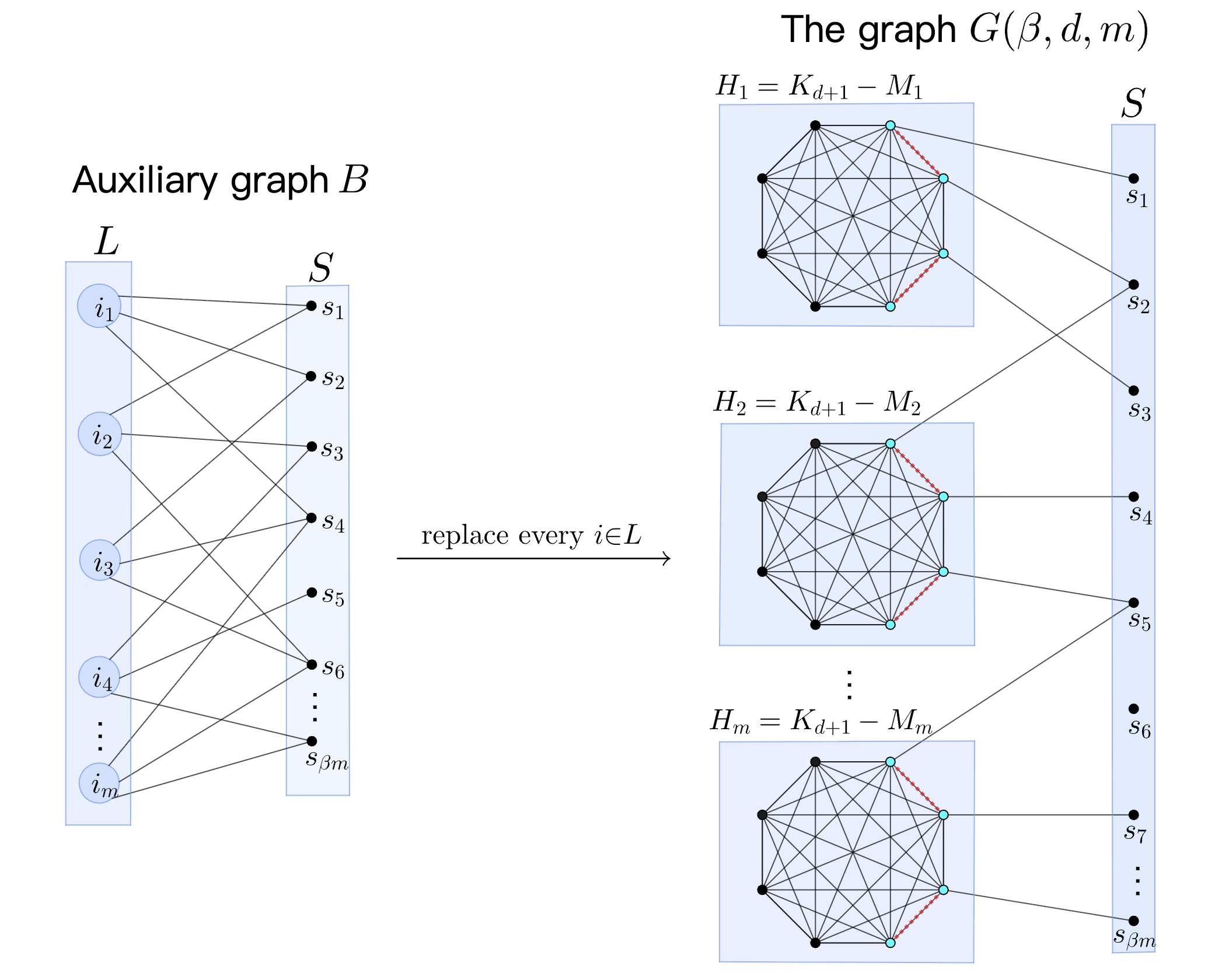}\\
      \caption{The auxiliary graph \(B\) with bipartition \((L,S)\). Each \(i\in L\) is replaced by \(H_i=K_{d+1}-M_i\), and the vertices of \(S\) are joined to the corresponding ports.}
     \label{fig:def3.1}
    \end{center}
  \end{figure}

For the construction to be well-defined, we only need \(\beta d/2\) to be an integer and \(\beta d\le d+1\).
In the rest of the paper, we set 
\begin{equation}\label{eq:beta-value}
\beta:= 2\left\lfloor\frac{\eta d}{8}\right\rfloor/d.
\end{equation}
For \(0<\eta<1/2\) and all sufficiently large
\(d\), we have
\begin{equation}\label{eq:beta-range}
\frac \eta8\le \beta\le \frac{\eta}{4}<\frac18.
\end{equation}
Throughout the remaining paper, we define
\begin{equation}\label{eq:alpha-delta}
(\alpha,\delta,\varepsilon_0):=
(2^{-8}\eta^2,2^{-10}\eta^2,2^{-10}\eta^2).
\end{equation}

\begin{lemma}\label{lem:construction-existence}
Let \(d\) be a sufficiently large positive integer,
and let \(\beta\) be as in (\ref{eq:beta-value}).
For every integer \(t\ge 0\), set $m:=d2^t$. Then the graph \(G=G(\beta,d,m)\) in the above definition exists.
Moreover, the auxiliary bipartite graph \(B\) can be chosen such that
\begin{equation}\label{eq:spectral}
\lambda(B)
\le \sqrt{\beta d-1}+\sqrt{d-1}
\le 1.7\sqrt d.
\end{equation}
and the resulting graph \(G\) is simple and \(d\)-regular with
$|V(G)|=m(d+1+\beta).$
\end{lemma}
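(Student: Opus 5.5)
The plan is to apply \cref{lem:Ramanujan-existence} with parameters $c:=\beta d$ and $d$, and then check the degree bookkeeping of the definition.

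\textbf{Parameters.} By \eqref{eq:beta-value}, $\beta d=2\lfloor \eta d/8\rfloor$ is an even integer. For $d$ sufficiently large it is at least $3$, since $\eta$ is fixed and $\beta d\ge \eta d/8-2$. Thus $\beta d/2$ is an integer, and $\beta d<d/8\le d+1$, so the matching $M_i$ of size $\beta d/2$ exists in $K_{d+1}$.

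\textbf{The Ramanujan graph.} Apply \cref{lem:Ramanujan-existence} with $(c,d)=(\beta d,d)$ and the given $t$. This yields a $(\beta d,d)$-biregular Ramanujan graph $B$ with $|L|=d2^t=m$ and $|S|=\beta d2^t=\beta m$, as the definition requires. The Ramanujan property gives $\lambda(B)\le\sqrt{\beta d-1}+\sqrt{d-1}$. By \eqref{eq:beta-range}, $\beta<1/8$, so this is at most $(\sqrt{1/8}+1)\sqrt d<1.36\sqrt d\le 1.7\sqrt d$, which is \eqref{eq:spectral}.

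\textbf{Simplicity.} Each $H_i$ is simple. Each port of $H_i$ corresponds to exactly one edge of $B$, so it receives exactly one edge to $S$. Distinct edges $is$ and $is'$ at $i$ go to distinct ports, and $B$ itself has no multiple edges. Hence no multiple edges arise between $S$ and the blocks. $S$ is kept independent, and distinct blocks are not joined, so $G$ is simple.

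\textbf{Regularity.} A non-port vertex of $H_i$ has degree $d$ in $K_{d+1}$ and loses nothing. A port loses one edge to $M_i$ and gains exactly one edge to $S$, so its degree is again $d$. A vertex $s\in S$ has degree $d$ in $B$, and each $B$-edge $is$ becomes exactly one edge from $s$ to a port. Hence $G$ is $d$-regular.

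\textbf{Vertex count.} We have $|V(G)|=m(d+1)+\beta m=m(d+1+\beta)$.

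The only delicate point is ensuring $\beta d\ge 3$ so that \cref{lem:Ramanujan-existence} applies; this is exactly what "sufficiently large $d$" provides. The rest is routine verification.
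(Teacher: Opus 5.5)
Your proposal is correct and follows the paper's proof: you apply \cref{lem:Ramanujan-existence} with $c=\beta d$ to get $|L|=m$, $|S|=\beta m$ and the Ramanujan bound, then check degrees port by port and count vertices. You are slightly more careful than the paper, since you explicitly verify $\beta d\ge 3$ (the lemma's hypothesis) and the numerical bound $(1+1/\sqrt{8})\sqrt d<1.7\sqrt d$, both of which the paper leaves implicit.
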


\begin{proof}
Apply Lemma \ref{lem:Ramanujan-existence} with \(c=\beta d\).
Since \(m=d2^t\), there exists a \((\beta d,d)\)-biregular
bipartite Ramanujan graph \(B\) with bipartition \((L,S)\) such that
$|L|=d2^t=m$ 
and 
$|S|=\beta d2^t=\beta m.$
Since \(B\) is Ramanujan,
\[
\lambda(B)
\le \sqrt{\beta d-1}+\sqrt{d-1}
\le 1.7\sqrt d.
\]

As \(\beta d/2\) is an integer and \(\beta<1\), there is a matching of size \(\beta d/2\) in \(K_{d+1}\), so the construction of every
block \(H_i\) is well-defined.

Every non-port vertex has degree \(d\) within its block. Every port
has degree \(d-1\) within its block and one neighbor in \(S\), while
every vertex of \(S\) has degree \(d\). Thus, \(G\) is simple and
\(d\)-regular. Finally,
%\begin{equation*}
$
    |V(G)|
=m(d+1)+|S|
=m(d+1+\beta).
$%\end{equation*}
\end{proof}

Applying Lemma \ref{lem:biregular-mixing} to the auxiliary graph $B$ from Lemma \ref{lem:construction-existence} yields
\begin{equation}\label{eq:mixing}
\left|e_B(A,Z)-\frac{d|A||Z|}{m}\right|
\le 1.7\sqrt{d|A||Z|}
\qquad
(A\subseteq L,\ Z\subseteq S).
\end{equation}
Furthermore, we choose $t$ as the largest integer \(t\ge 0\) such that
\begin{equation}\label{eq:choice_t}
    \log\bigl(d\,2^t(d+1+\beta)\bigr)=\log n\le \sqrt{2d}.
\end{equation}
For all sufficiently large \(d\), the choice \(t=0\) satisfies
\eqref{eq:choice_t}, and so such a largest integer \(t\) exists. Under the choice of $t$, we have
$\sqrt{2d}-\log 2<\log n\le \sqrt{2d},$
and hence
\begin{equation}\label{d-value}
d=\left(\frac12+o_d(1)\right)\log^2 n,
\end{equation}
satisfying \ref{thcon3} of Theorem \ref{thm:main}.
As $m=d\,2^t$ and $n=m(d+1+\beta)$, we have that for all sufficiently large \(d\),
$\log\!\left(\frac{15n}{2\eps_2d}\right)
=(1+o(1))\sqrt{2d}
,$
 and hence
\begin{equation}\label{eq:scale}
\rho\left(\frac{n}{2}\right)\frac{n}{2}=\frac{\eps_1n}{2\log^2(15n/(2\eps_2d))}\le \frac{\eps_1 n}{4(1+o(1))d}\leq \frac{\varepsilon_1m}{2(1+o(1))}
<\delta m.
\end{equation}
The penultimate inequality holds as $\eps_1\le \eps_0=\delta$ and $n=m(d+1+\beta)\le 2md$.
In Section \ref{sec:main-proof}, we will use  \eqref{eq:scale} to ensure that the expansion required for a set of size \(n/2\) is less than \(\delta m\), see \eqref{eq:linear-scale} in more details below.

\section{Proof of Theorem \ref{thm:main}}\label{sec:main-proof}
Before proceeding our proof, we first have the following proposition.

\begin{proposition}\label{prop:spectral-neighbourhood}
Let \(\beta\) and \(\alpha\) be chosen as in~(\ref{eq:beta-value}) and~(\ref{eq:alpha-delta}).
Suppose that \(B\) is a
$(\beta d,d)$-biregular bipartite graph with bipartition \((L,S)\) such
that $|L|=m$, $|S|=\beta m$ and $\lambda(B)\le 1.7\sqrt d$. Then every
\(A\subseteq L\) satisfies
\[
|N_B(A)|\ge \alpha\min\{d|A|,m\}.
\]
\end{proposition}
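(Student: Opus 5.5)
The plan is to apply the biregular expander mixing lemma (Lemma~\ref{lem:biregular-mixing}) to the pair $(A,Z)$ with $Z:=N_B(A)$, and to argue by contradiction. The key observation is that every edge of $B$ leaving $A$ ends in $N_B(A)$. Since each vertex of $L$ has degree $\beta d$, this gives
\[
e_B(A,Z)=\beta d|A|.
\]
On the other hand, $B$ is $(\beta d,d)$-biregular with $|S|=\beta m$, so the main term in Lemma~\ref{lem:biregular-mixing} is $\beta d|A||Z|/(\beta m)=d|A||Z|/m$. Using $\lambda(B)\le 1.7\sqrt d$, we obtain
\[
\beta d|A|\le \frac{d|A||Z|}{m}+1.7\sqrt{d|A||Z|}.
\]
We may assume $A\neq\emptyset$, since otherwise the claim is trivial.

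Now suppose, for contradiction, that $|Z|<\alpha\min\{d|A|,m\}$. Then both $|Z|<\alpha m$ and $|Z|<\alpha d|A|$ hold, and we use each one to bound one of the two terms on the right:
\begin{itemize}
\item the first term satisfies $d|A||Z|/m<\alpha d|A|$, using $|Z|<\alpha m$;
\item the second term satisfies $1.7\sqrt{d|A||Z|}<1.7\sqrt{\alpha}\,d|A|$, using $|Z|<\alpha d|A|$.
\end{itemize}
Dividing by $d|A|>0$ then gives
\[
\beta<\alpha+1.7\sqrt\alpha .
\]

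It remains to check that this contradicts the parameter choices~\eqref{eq:beta-range} and~\eqref{eq:alpha-delta}. We have $\alpha=2^{-8}\eta^2$, so $\sqrt\alpha=\eta/16$, and $\beta\ge\eta/8$. The required inequality becomes
\[
\frac{\eta}{8}\ \ge\ \frac{1.7\,\eta}{16}+\frac{\eta^2}{256},
\]
which is equivalent to $0.3\eta/16\ge\eta^2/256$, i.e.\ $\eta\le 4.8$. This holds comfortably since $\eta<1/2$.

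The only delicate point is that the crude dichotomy (either the main term or the error term carries half of $\beta d|A|$) loses constants and would only yield about $\eta^2/740$ rather than $2^{-8}\eta^2$. That is why I would argue by contradiction, applying both upper bounds on $|Z|$ simultaneously, which keeps the constants tight enough.
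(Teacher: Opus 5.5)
Your proof is correct and follows the paper's argument essentially line for line. The paper also applies the mixing lemma with $Z=N_B(A)$ and assumes $|Z|<\alpha\min\{d|A|,m\}$, then bounds the right-hand side by $(\alpha+1.7\sqrt\alpha)\,d|A|<\frac{\eta}{8}d|A|\le\beta d|A|$, using $\beta\ge\eta/8$ from \eqref{eq:beta-range} for large $d$, exactly as you do.
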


\begin{proof}
Let \(Z:=N_B(A)\). Since every edge incident with \(A\) ends in \(Z\), we have
\[
\beta d|A|
\le \frac{d|A||Z|}{m}+1.7\sqrt{d|A||Z|}.
\]
Suppose for a contradiction that
\(|Z|<\alpha\min\{d|A|,m\}\). Recall that \(0<\eta<1/2\) and $\alpha=2^{-8}\eta^2$, and then the right-hand side of the above inequality is less than
\[
\bigl(\alpha+1.7\sqrt\alpha\bigr)d|A|
\le \left(2^{-8}\eta^2+1.7\cdot 2^{-4}\eta\right)d|A|
<\frac \eta8d|A|
\le\beta d|A|.
\]
This is a contradiction. 
\end{proof}

\begin{proof}[Proof of Theorem \ref{thm:main}]
%%\old{Take \(G=G(\beta, d, m)\) with $\beta=0.49$ and $m=d2^t$, where $t$ is chosen as in (\ref{eq:choice_t}). By our construction, \(G\) is an \(n\)-vertex \(d\)-regular graph. By our choice of $t$, \ref{thcon3} is satisfied. Thus it remains to verify the expansion property \ref{thcon1} and to show that \(G\) has no cycle of length at least \(n/2\), i.e., \ref{thcon2}. The latter is immediate from the block--separator structure, and we verify it first.}
Fix $0<\varepsilon_1<\varepsilon_0$, and take
$G=G(\beta,d,m)$ with the parameters chosen above and $m=d2^t$, where
$t$ is chosen as in \eqref{eq:choice_t}. By construction, $G$ is an
$n$-vertex $d$-regular graph. Moreover, by \eqref{d-value}, 
$d=(1/2+o_d(1))\log^2 n$, which in particular proves \ref{thcon3}.
It remains to prove \ref{thcon1} and \ref{thcon2}. We begin with the
circumference bound.

%\old{Let \(C\) be a cycle and set $k:=|V(C)\cap S|$. Recall that \(S\) is independent and that \(G-S\) is the disjoint union of the blocks \(H_i\). If \(k=0\), then \(C\) lies in one block and has length at most \(d+1<n/2\). Suppose that \(k>0\). Deleting \(C\cap S\) leaves \(k\) paths, each contained in one block, and hence \(C\) visits at most \(k\) blocks. Since \(k\le |S|=\beta m\), we have
%\[
%|C|\le k(d+2)\le \beta m(d+2)<\frac n2
%\]
%for all sufficiently large \(d\).}
Let \(C\) be a cycle and set $k:=|V(C)\cap S|$. Recall that \(S\) is
independent and that \(G-S\) is the disjoint union of the blocks
\(H_i\). If \(k=0\), then \(C\) lies in one block and has length at
most \(d+1<\eta n\) for all sufficiently large \(d\). Suppose that
\(k>0\). Deleting \(C\cap S\) leaves \(k\) paths, each contained in one
block, and hence \(C\) visits at most \(k\) blocks. Since
\(k\le |S|=\beta m\), we have
\[
\frac{|C|}{n}
\le \frac{\beta(d+2)}{d+1+\beta}
<\eta
\]
for all sufficiently large \(d\).  Thus \ref{thcon2} holds.

It remains to prove \ref{thcon1}. Let \(X\subseteq V(G)\) be a set  with $\frac{\eps_2d}{2}\le |X|\le \frac n2$. Set $Y:=X\cap S$, and let \(A\subseteq L\) index the blocks met by \(X\). For $i\in A$, set $X_i:=X\cap V(H_i)$. Denote by \(p\) the number of indices \(i\in A\) with \(|X_i|=1\), and set
\[
q:=\sum_{i\in A}(d+1-|X_i|).
\]
Thus \(q\) is the total number of vertices not included by $X$ from the blocks indexed by \(A\).

Set
\[
U:=N_G(X)\cap \left(\bigcup_{i\in A}V(H_i)\right).
\]
If \(|X_i|=1\), then the unique vertex of \(X_i\) has at least \(d-1\) neighbors inside \(H_i\). If \(|X_i|\ge 2\), then every vertex of \(V(H_i)\setminus X_i\) has a neighbor in \(X_i\) as every vertex of \(H_i\) has at most one non-neighbor in that block. Therefore
\begin{equation}\label{eq:U}
|U|\ge q-p.
\end{equation}

We first record the maximum expansion required in the testing range. Recall that $\delta=2^{-10}\eta^2$. Since $\rho(x)x$ is increasing in $x$ for \(x\ge \eps_2d/2\), \eqref{eq:scale} gives
\begin{equation}\label{eq:linear-scale}
\rho(|X|)|X|
\le \rho(n/2)\frac n2
<\delta m.
\end{equation}
Thus, to prove the required expansion property for $X$, it suffices to show 
\begin{equation}\label{eq:linear_m}
    |N_G(X)|\ge \delta m.
\end{equation}
This estimate will be useful when $|A|/m$ is large.

 \begin{claim}\label{claim:Alarge}
      If $|A|>\frac{3m}{5}$, then \eqref{eq:linear_m} holds and hence \ref{thcon1} holds.
 \end{claim}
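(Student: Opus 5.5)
The plan is to count the vertices that $X$ misses inside the blocks it touches, and then apply \eqref{eq:U}; Proposition~\ref{prop:spectral-neighbourhood} and the mixing estimate \eqref{eq:mixing} are not needed here. When $A$ is a large fraction of $L$ but $|X|\le n/2$, the touched blocks cannot be mostly filled by $X$. So $q$ must be of order $md$, and $U$ alone already supplies far more than $\delta m$ external neighbours.

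First, since the sets $X_i$ ($i\in A$) are disjoint subsets of $X$, we have $\sum_{i\in A}|X_i|\le |X|\le n/2=m(d+1+\beta)/2$. By the definition of $q$,
\[
q=|A|(d+1)-\sum_{i\in A}|X_i|\ \ge\ \frac{3m}{5}(d+1)-\frac{m(d+1+\beta)}{2}
=\frac{m(d+1)}{10}-\frac{\beta m}{2}.
\]
Next, $p\le |A|\le m$ trivially. Combining this with \eqref{eq:U} gives
\[
|N_G(X)|\ \ge\ |U|\ \ge\ q-p\ \ge\ \frac{m(d+1)}{10}-\frac{\beta m}{2}-m\ \ge\ \frac{md}{20}
\]
for all sufficiently large $d$, using $\beta<1/8$ from \eqref{eq:beta-range}.

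Finally, since $\delta=2^{-10}\eta^2<1$, for large $d$ we have $md/20\ge \delta m$, which is \eqref{eq:linear_m}. Then \eqref{eq:linear-scale} yields $|N_G(X)|\ge \rho(|X|)|X|$, so \ref{thcon1} holds for this $X$.

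There is no real obstacle in this step. The only points to check are that $U\subseteq N_G(X)$, which holds by definition of $U$, and that the inequality $|X|\le n/2$ is used with the exact value $n=m(d+1+\beta)$. The threshold $3/5>1/2$ is what leaves the slack of order $md/10$.
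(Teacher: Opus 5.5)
Your proposal is correct and is essentially the paper's argument. It bounds $q$ from below using $\sum_{i\in A}|X_i|\le |X|\le n/2$ and $|A|>3m/5$, subtracts $p$, and then applies \eqref{eq:U} and \eqref{eq:linear-scale}; the only cosmetic difference is that you use $p\le m$ where the paper uses $p\le |A|$ (giving its cleaner $q-p\ge d|A|-|X|$), which costs nothing for large $d$.
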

 \begin{poc}
    As \(|X|\le n/2=m(d+1+\beta)/2\), we have
\[
d|A|-|X|
>m\left(\frac{3d}{5}-\frac{d+1+\beta}{2}\right)
>\frac m{40}\ge\delta m
\]
for all sufficiently large \(d\). Since \(p\le |A|\), \eqref{eq:U} gives
\[
|N_G(X)|
\ge |U|
\ge q-p
\ge (d+1)|A|-|X|-|A|
=d|A|-|X|
>\delta m,
\] 
as claimed.
 \end{poc}

We may therefore assume from now on that
\begin{equation}\label{eq:A-small}
|A|\le \frac{3m}{5}.
\end{equation}
In this case we also use the neighbors across the interface. They come from two directions: vertices of \(Y\) send edges into untouched blocks, while the touched blocks expose the vertices of \(S\setminus Y\). Accordingly, define
\[
\Phi(A,Y):=
e_B(Y,L\setminus A)+|N_B(A)\setminus Y|.
\]
Next, we give a lower bound on \(\Phi(A,Y)\).

\begin{claim}\label{clm:interface}
If $|A|\le \frac{3m}{5}$, then $\Phi(A,Y)\ge 2\delta\min\{d(|A|+|Y|),m\}$.
\end{claim}

\begin{poc}
Since every vertex of \(Y\) has degree \(d\) in \(B\), we have
\[
e_B(Y,L\setminus A)=d|Y|-e_B(A,Y).
\]
Applying \eqref{eq:mixing} to the last term gives
\begin{equation}\label{eq:outside-edges}
e_B(Y,L\setminus A)
\ge d|Y|-\frac{d|A||Y|}{m}
-1.7\sqrt{d|A||Y|}.
\end{equation}
To estimate the other term $|N_B(A)\backslash Y|$ in $\Phi(A,Y)$, we use Proposition \ref{prop:spectral-neighbourhood} and divide it into two cases depending on the behavior of $\min\{d|A|,m\}$. Recall that $\alpha=2^{-8}\eta^2$.

\textbf{Case 1: \(d|A|\le m\).}
By Proposition  \ref{prop:spectral-neighbourhood}, $|N_B(A)\setminus Y|
\ge \alpha d|A|-|Y|$. 
Together with \eqref{eq:outside-edges}, we have
\[
\Phi(A,Y)
\ge
\alpha d|A|+(d-2)|Y|
-1.7\sqrt{d|A||Y|}.
\]
By the arithmetic--geometric mean inequality
$2\sqrt{|A||Y|}\le |A|+|Y|$ and the fact that
$\frac{1.7\sqrt d}{2}\le\frac{\alpha d}{4}$, 
we have that for all sufficiently large \(d\),
\[
1.7\sqrt{d|A||Y|}
\le \frac{\alpha d}{4}(|A|+|Y|).
\]
Consequently,
\[
\Phi(A,Y)
\ge \frac{\alpha d}{2}(|A|+|Y|)
=2\delta d(|A|+|Y|).
\]

\textbf{Case 2: \(d|A|> m\).}
By Proposition \ref{prop:spectral-neighbourhood},
$|N_B(A)|\ge \alpha m.$
Recall that $\delta=\alpha/4$. 
If \(|Y|\le\alpha m/2\), then
\[
\Phi(A,Y)\ge |N_B(A)|-|Y|
\ge\frac{\alpha m}{2}= 2\delta m.
\]
Otherwise \(|Y|>\alpha m/2\). 
Since \(|A|\le3m/5\), the first two terms in
\eqref{eq:outside-edges} are at least \(2d|Y|/5\). Moreover,
\[
\frac{1.7\sqrt{d|A||Y|}}{d|Y|}
\le
1.7\sqrt{\frac{3m}{5d|Y|}}
\le
1.7\sqrt{\frac{6}{5\alpha d}}
=o_d(1),
\]
where we used \(|Y|>\alpha m/2\). Hence, for all sufficiently large
\(d\),
\[
e_B(Y,L\setminus A)\ge\frac{d|Y|}{5}.
\]
This completes the proof of Claim \ref{clm:interface}.
\end{poc}
  
We now compare \(\Phi(A,Y)\) with \(N_G(X)\). Set
\[
W:=N_G(X)\cap \left(\bigcup_{i\notin A}V(H_i)\right)
\qquad\text{and}\qquad
Z:=N_G(X)\cap(S\setminus Y).
\]
The sets \(U,W,Z\) are pairwise disjoint, see Figure \ref{fig:prof1}. By the definition of the ports, we have 
$|W|=e_B(Y,L\setminus A)$. 
Moreover, every vertex of \(N_B(A)\setminus(Y\cup Z)\) has an incident edge from \(A\) whose corresponding port is omitted from \(X\). For each vertex of \(N_B(A)\setminus(Y\cup Z)\), choose one incident
edge from \(A\). The corresponding omitted ports are distinct for
distinct vertices of \(S\). Hence $|N_B(A)\setminus(Y\cup Z)|\le q$, and therefore
$|Z|\ge |N_B(A)\setminus Y|-q$. Together this with \eqref{eq:U}, we obtain
\[
|N_G(X)|
\ge |U|+|W|+|Z|
\ge(q-p)+e_B(Y,L\setminus A)
+|N_B(A)\setminus Y|-q
=\Phi(A,Y)-p.
\]

On the other hand, the \(p\) singleton blocks have disjoint internal neighborhoods, each of size at least \(d-1\). Hence
$|N_G(X)|\ge(d-1)p.$
Consequently,
\begin{equation}\label{eq:master}
|N_G(X)|
\ge
\max\{\Phi(A,Y)-p,(d-1)p\}
\ge\left(1-\frac1d\right)\Phi(A,Y)
\ge\frac12\Phi(A,Y).
\end{equation}
\begin{figure}[H]
\begin{center}
\includegraphics[scale=0.4]{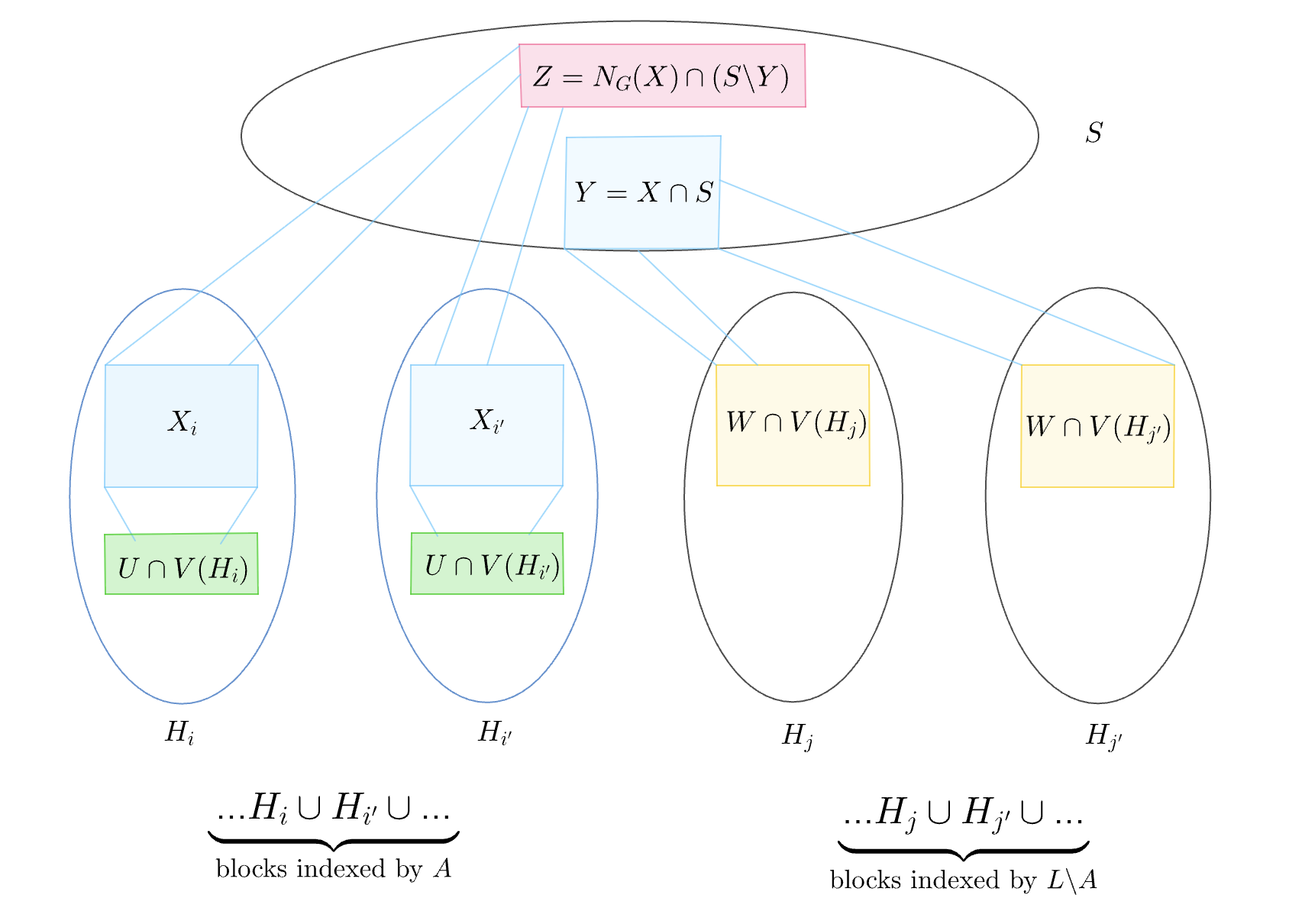}\\
      \caption{Illustration of the  decomposition of $N_G(X)=U\mathbin{\cup}W\mathbin{\cup}Z$. Here $U$ comes from internal neighbors in the blocks indexed by $A$, while the blue lines represent the edges leaving $X$.
}
     \label{fig:prof1}
    \end{center}
  \end{figure}
  
By Claim~\ref{clm:interface} and \eqref{eq:master},
\begin{equation}\label{minineq}
    |N_G(X)|
\ge\delta\min\{d(|A|+|Y|),m\}.
\end{equation}
If the minimum in (\ref{minineq}) is $m$, then \eqref{eq:linear_m} and \eqref{eq:linear-scale} give \ref{thcon1}.
Otherwise,
\[
|X|\le(d+1)|A|+|Y|
\le2d(|A|+|Y|),
\]
and hence
\[
|N_G(X)|\ge\frac{\delta}{2}|X|.
\]
Since \(|X|\ge\eps_2d/2\),
$\rho(|X|)
\le \frac{\eps_1}{\log^2(15/2)}
<\frac{\delta}{2}.$
and therefore
\[
|N_G(X)|\ge \rho(|X|)|X|.
\]
This completes the proof of Theorem \ref{thm:main}.
\end{proof}

\section{Concluding remarks}\label{sec:conc}

The block--separator mechanism cannot support a substantially larger degree. Let $X$ be the union of $\lfloor m/3\rfloor$ whole blocks. Then $|X|=\Theta(n)$, whereas $N_G(X)\subseteq S$ and $|S|=\Theta(n/d)$. Expansion at this scale forces
\[
 \frac nd=\Omega\!\left(\frac{n}{\log^2(n/d)}\right),
 \qquad\text{and hence}\qquad
 d=O\!\left(\log^2\frac nd\right).
\]
Thus the exponent $2$ in \cref{thm:main} is intrinsic to this construction, although not necessarily to the underlying Hamiltonicity problem.

The dependence of $\varepsilon_0$ on $\eta$ in \cref{thm:main} is
unavoidable within the one-level block--separator scheme. Indeed, for a
whole block $X=V(H_i)$ we have $|N_G(X)|=\beta d$ and $|X|=d+1$, so
expansion already forces
\[
 \beta d\ge
 \frac{\varepsilon_1(d+1)}
 {\log^2(15(d+1)/(\varepsilon_2d))}.
\]
On the other hand, the circumference estimate uses $\beta<\eta$.
Thus $\eta$ cannot tend to zero while $\varepsilon_1$ remains fixed.

The principal remaining question is therefore the degree threshold for a fixed expansion constant. Our construction gives counterexamples at degree $\Theta(\log^2 n)$, while the nearly Hamilton cycle theorem of Letzter, Methuku and Sudakov applies at a much larger polylogarithmic degree. It is natural to ask whether the logarithmic-square scale is already sufficient on the positive side.

\begin{problem}
For every $\varepsilon>0$, does there exist $C=C(\varepsilon)>0$ such that every sufficiently large $n$-vertex $d$-regular $(\varepsilon,d)$-expander with $d\ge C\log^2 n$ contains a Hamilton cycle?
\end{problem}

\vspace{0.5cm}
\noindent
\textbf{Declaration on the use of AI.} During the preparation of this manuscript, the authors used ChatGPT 5.6 as an auxiliary tool, primarily to check technical details in the verification of the sublinear expansion property of the counterexample construction, and to improve the language of the manuscript.

\end{document}